\newtheorem{thm}{Theorem}[section]
\newtheorem{clm}[thm]{Claim}
\author[D. Soukup]{D\'aniel Soukup}
\author[L. Soukup]{Lajos Soukup}
\address{E\"otv\"os L\'or\'and University}
\email{daniel.t.soukup@gmail.com}
\address{Alfr\'ed R\'enyi Institute of Mathematics}
\email{soukup@renyi.hu}
\title{Club guessing for dummies}
\keywords{club guessing}
\subjclass[2000]{03E04}
\begin{document}
\maketitle

\begin{abstract}
We give a direct, detailed and relatively short proof of Shelah's theorem on club guessing sequences on $S^{\mu^+}_\mu$ (for any regular, uncountable cardinal $\mu$).
\end{abstract}

\section{Introduction}

The aim of this paper is to give an easy proof of a less known club guessing theorem of Shelah. We will use the following notations. For a cardinal $\lambda$ and a regular cardinal $\mu$ let $S^\lambda_\mu$ denote the ordinals in $\lambda$ with cofinality $\mu$. For $S\subseteq S^\lambda_\mu$ an  \emph{$S$-club sequence} is a sequence $\underline{C}=\langle C_\delta:\delta\in S\rangle$ such that $C_\delta \subseteq \delta$ is a club in $\delta$ of order type $\mu$. One of the basic results in Shelah's club guessing theory is the following.

\begin{thm}[{\cite[Claim 2.3]{sh365}}]\label{shelah1}Let $\lambda$ be a cardinal such that $cf(\lambda)\geq \mu^{++}$ for some regular $\mu$ and let $S\subseteq S_\mu^\lambda$ stationary. Then there is an $S$-club sequence $\underline{C}=\langle C_\delta:\delta\in S\rangle$ such that for every club $E\subseteq \lambda$ there is $\delta\in S$ (equivalently, stationary many) such that $C_\delta\subseteq E$.
\end{thm}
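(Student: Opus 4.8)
The plan is to argue by contradiction, building a $\subseteq$-decreasing chain of clubs of $\lambda$ that repeatedly ``defeats'' the current candidate and deriving a contradiction from a counting principle on subsets of $\mu$. The basic device is a \emph{restriction operation} on club sequences. Fix at the outset an arbitrary $S$-club sequence $\underline{C}=\langle C_\delta:\delta\in S\rangle$ and write $C_\delta=\{c_\delta(i):i<\mu\}$ in increasing continuous enumeration. For a club $E\subseteq\lambda$ and $\delta\in S$ lying in the (club) set of double accumulation points $\mathrm{acc}^{(2)}(E)$, I set
\[
C_\delta[E]:=\overline{C_\delta\cap\mathrm{acc}(E)}\subseteq\delta .
\]
Since $\mathrm{acc}(E)\cap\delta$ and $C_\delta$ are both club in $\delta$ and $\mathrm{cf}(\delta)=\mu$ is regular and uncountable, their intersection is club in $\delta$; its order type is a limit of cofinality $\mu$ that is $\le\mu$, hence exactly $\mu$. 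As $C_\delta$ is closed in $\delta$, the closure stays inside $C_\delta$, so $C_\delta[E]=\{c_\delta(i):i\in I^\delta_E\}$ for a club $I^\delta_E\subseteq\mu$, and moreover $C_\delta[E]\subseteq\mathrm{acc}(E)\subseteq E$. The two features to keep in mind are that $C_\delta[E]$ is again a legitimate club of order type $\mu$ sitting \emph{inside} $E$, and that $E\mapsto I^\delta_E$ is monotone: $E'\subseteq E$ forces $\mathrm{acc}(E')\subseteq\mathrm{acc}(E)$ and hence $I^\delta_{E'}\subseteq I^\delta_E$.

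Next I would run a recursion of length at most $\mu^+$ producing $\subseteq$-decreasing clubs $\langle E_\xi:\xi\le\mu^+\rangle$ with $E_0=\lambda$. At a successor stage I test whether the sequence $\langle C_\delta[E_\xi]:\delta\rangle$ already guesses on a stationary set; precisely, whether every club $E'\subseteq\mathrm{acc}(E_\xi)$ captures stationarily many $\delta$ (i.e.\ $\{\delta:C_\delta[E_\xi]\subseteq E'\}$ is stationary). If so I \emph{halt}. If not, there is a club $E'\subseteq\mathrm{acc}(E_\xi)$ for which $N_\xi:=\{\delta:C_\delta[E_\xi]\subseteq E'\}$ is nonstationary; I set $E_{\xi+1}:=E'$ and record a club $D_\xi$ disjoint from $N_\xi$. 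At limit stages I take $E_\xi:=\bigcap_{\eta<\xi}E_\eta$. This is exactly where the hypothesis $\mathrm{cf}(\lambda)\ge\mu^{++}$ is needed: the club filter on $\lambda$ is $\mathrm{cf}(\lambda)$-complete, so an intersection of at most $\mu^+<\mathrm{cf}(\lambda)$ clubs is again a club, and the construction never degenerates below $\mu^+$.

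The heart of the matter is showing the recursion must halt before $\mu^+$. Suppose not, so all clubs $E_\xi$ and witnesses $D_\xi$ are defined for $\xi<\mu^+$. Intersecting the $\mu^+$ clubs $D_\xi$ together with the double-accumulation clubs and the stationary set $S$ (legal, again, since $\mu^+<\mathrm{cf}(\lambda)$) yields a stationary set, so I may fix a single $\delta\in S$ with $\delta\in D_\xi\cap\mathrm{acc}^{(2)}(E_\xi)$ for \emph{every} $\xi<\mu^+$. For this $\delta$ the index sets $I^\delta_\xi:=I^\delta_{E_\xi}\subseteq\mu$ are weakly decreasing by monotonicity. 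At each successor step $\delta\notin N_\xi$, so $C_\delta[E_\xi]\not\subseteq E_{\xi+1}$: some $p\in C_\delta[E_\xi]$ lies outside $E_{\xi+1}\supseteq\mathrm{acc}(E_{\xi+1})\supseteq C_\delta[E_{\xi+1}]$, whence the index of $p$ belongs to $I^\delta_\xi\setminus I^\delta_{\xi+1}$ and the drop is \emph{strict}. A weakly decreasing $\mu^+$-chain of subsets of $\mu$ that drops strictly at each of the $\mu^+$ successors would remove $\mu^+$ distinct elements from a set of size $\mu$, which is absurd. Hence the recursion halts at some $\xi^*<\mu^+$, and $\langle C_\delta[E_{\xi^*}]:\delta\rangle$ is the desired sequence: given any club $E\subseteq\lambda$, applying the halting condition to $E':=\mathrm{acc}(E\cap E_{\xi^*})\subseteq\mathrm{acc}(E_{\xi^*})$ produces stationarily many $\delta$ with $C_\delta[E_{\xi^*}]\subseteq E'\subseteq E$; this simultaneously delivers the ``some $\delta$'' and ``stationarily many $\delta$'' forms, confirming the parenthetical equivalence. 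I expect the main obstacle to be bookkeeping the shrinking stationary/club traces so that a \emph{single} $\delta$ remains good at all $\mu^+$ stages at once, which is exactly the step consuming the full strength of $\mathrm{cf}(\lambda)\ge\mu^{++}$; the purely combinatorial counting at the end is then routine.
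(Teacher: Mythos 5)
A preliminary remark on the comparison: the paper contains no proof of this theorem at all --- it explicitly defers to \cite{card} --- so your attempt can only be measured against that standard argument, and in substance it reproduces it. The skeleton is exactly right: restrict each $C_\delta$ into a club $E$, run a $\subseteq$-decreasing chain $\langle E_\xi:\xi<\mu^+\rangle$ of defeating clubs, use the $\mathrm{cf}(\lambda)$-completeness of the club filter on $\lambda$ (which is precisely what $\mathrm{cf}(\lambda)\geq\mu^{++}$ buys, and works even for singular $\lambda$) to fix a single $\delta\in S$ lying in all the witnessing clubs $D_\xi\cap\mathrm{acc}^{(2)}(E_\xi)$ at once, and derive a contradiction from the strictly dropping traces. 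Your counting step is correct and clean: with $i_\xi\in I^\delta_\xi\setminus I^\delta_{\xi+1}$ and the $I^\delta_\xi$ weakly decreasing (which your monotonicity observation gives, also across limits), $\xi<\eta$ implies $i_\xi\notin I^\delta_\eta\ni i_\eta$, so $\xi\mapsto i_\xi$ injects $\mu^+$ into $\mu$. It is worth noticing that this is the same defeat-and-intersect scheme the paper runs in Claims \ref{pim} and \ref{dana} for the harder case $\lambda=\mu^+$, except that there the iteration has length $\omega$ and the contradiction is well-foundedness of the ordinals, whereas your $\mu^+$-length run with a cardinality contradiction is exactly what the extra room $\mu^+<\mathrm{cf}(\lambda)$ permits.

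There is, however, one genuine gap: the theorem is stated for an arbitrary regular $\mu$, including $\mu=\omega$, and your restriction operation $C_\delta[E]=\overline{C_\delta\cap\mathrm{acc}(E)}$ breaks down in that case --- you yourself quietly write ``$\mathrm{cf}(\delta)=\mu$ is regular and uncountable'', which is not a hypothesis of the statement. When $\mathrm{cf}(\delta)=\omega$, two clubs in $\delta$ can be disjoint, so $C_\delta\cap\mathrm{acc}(E)$ may be bounded in $\delta$ and $C_\delta[E]$ is then not a club of $\delta$; for uncountable $\mu$ your proof is fine as written. The standard repair is to \emph{drop} rather than intersect: for $\delta\in\mathrm{acc}(E)$ set $C_\delta[E]=\{\sup(E\cap\gamma):\gamma\in C_\delta,\ \gamma>\min E\}$. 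Since $\gamma\mapsto\sup(E\cap\gamma)$ is weakly increasing and continuous at limits, the image is a club in $\delta$ of order type $\mu$, contained in $E$ because $E$ is closed. The price is that set-inclusion monotonicity of the index sets is lost, so your injection argument must be replaced by the well-foundedness count: for each fixed $\gamma\in C_\delta$ the ordinals $\sup(E_\xi\cap\gamma)$ decrease weakly in $\xi$, and at every successor stage the failure $C_\delta[E_\xi]\not\subseteq E_{\xi+1}$ forces a strict drop at some $\gamma$ (if $\sup(E_{\xi+1}\cap\gamma)=\sup(E_\xi\cap\gamma)$, this common value would lie in the closed set $E_{\xi+1}$); since each of the $\mu$ many $\gamma$'s can suffer only finitely many strict drops, fewer than $\mu^+$ drops occur in total, a contradiction as before. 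Beyond this, only trivial bookkeeping is missing: $C_\delta[E_{\xi^*}]$ is defined only for $\delta\in S\cap\mathrm{acc}^{(2)}(E_{\xi^*})$ (respectively $S\cap\mathrm{acc}(E_{\xi^*})$ in the drop version), so you must set $C_\delta$ arbitrarily for the remaining $\delta\in S$; the guessed $\delta$'s all lie inside the club where the real definition applies, so stationarity is unaffected.
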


Actually Shelah in \cite{sh365} proves more, but a detailed proof of Theorem \ref{shelah1} can be found in \cite[Theorem 2.17]{card}.\\ What can be said about guessing clubs on $S^{\mu^+}_\mu$? It is known that in ZFC there are no such strong guessing sequences generally but some approximation can be done in this case either.

\section{Club guessing on $S^{\mu^+}_\mu$}

\begin{thm}[{\cite[Claim 3.3]{sh572}}] \label{shelah2} Let $\lambda$ be a cardinal such that $\lambda=\mu^+$ for some uncountable, regular $\mu$ and $S\subseteq S^\lambda_\mu$ stationary. Then there is an $S$-club sequence $\underline{C}=\langle C_\delta:\delta\in S \rangle$ such that $C_\delta=\{\alpha^\delta_\zeta:\zeta<\mu\}\subseteq \delta$ and for every club $E\subseteq \lambda$ there is $\delta\in S$ (equivalently, stationary many) such that: $$\{\zeta<\mu:\alpha^\delta_{\zeta+1}\in E\} \text{ is stationary}.$$
\end{thm}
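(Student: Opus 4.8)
The plan is to start from an arbitrary $S$-club sequence and improve it by a "shrinking" or "thinning" argument driven by a hypothetical bad club $E$, exactly in the spirit of the classical club guessing proofs (Theorem~\ref{shelah1}). Suppose toward a contradiction that \emph{no} $S$-club sequence works; then for every candidate sequence $\underline{C}$ there is a club $E_{\underline{C}}\subseteq\lambda$ such that for all $\delta\in S$ the set $\{\zeta<\mu:\alpha^\delta_{\zeta+1}\in E_{\underline{C}}\}$ is \emph{non}-stationary in $\mu$. My first step would be to fix an initial sequence $\langle C^0_\delta:\delta\in S\rangle$ with each $C^0_\delta$ a club in $\delta$ of order type $\mu$, and then iterate a process that, given the current sequence together with its witnessing club, replaces each $C_\delta$ by a suitable subset. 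The key point distinguishing this from the $cf(\lambda)\ge\mu^{++}$ case is that here $\lambda=\mu^+$, so we cannot iterate $\mu^{++}$ times and hope a club stabilizes the whole picture; instead the iteration must be of length at most $\mu^+$ and the combinatorics must live at the level of the order type $\mu$.

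The central mechanism I would use is the following. For a club $E\subseteq\lambda=\mu^+$, let $E'$ denote its set of limit points, and for $\delta\in S$ consider the increasing enumeration $\langle\alpha^\delta_\zeta:\zeta<\mu\rangle$ of $C_\delta$. The hypothesis "$\{\zeta:\alpha^\delta_{\zeta+1}\in E\}$ is stationary" is genuinely weaker than "$C_\delta\subseteq E$", and that weakness is what makes the theorem provable on $S^{\mu^+}_\mu$ where the stronger guessing fails. So the plan is: given a bad witness $E$ for a sequence $\underline{C}$, define a new sequence by intersecting, for each $\delta$, the club $C_\delta$ with the club $\{\gamma<\delta:\gamma$ is a limit point of $E\cap\delta\}$ (or an appropriate trace $C_\delta\cap E$), re-enumerate in order type $\mu$, and pass to limit points at stage $\mu$. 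One would track, for each $\delta$, how the trace of successive witnessing clubs sits inside $C_\delta$, and use a pressing-down / Fodor argument \emph{on $\mu$} (here is where uncountable regular $\mu$ is used) to show that if the guessing kept failing, the order types of the successive traces would have to strictly and continually decrease, which is impossible within ordinals below $\mu$.

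More concretely, I expect the proof to run through a fixed-point or rank argument: assign to each potential sequence and each $\delta$ an ordinal invariant measuring the "defect" (for instance, the supremum over witnessing clubs $E$ of the order type of $C_\delta\cap E'$, or the least $\zeta$ past which the trace behaves badly), then argue that a bad witness lets us lower this invariant. Because all these invariants are bounded below $\mu^+$, after at most $\mu^+$ (in fact the argument should stabilize much sooner, using that a club in $\mu^+$ meets a club of $\delta\in S$ in a club of $\delta$) steps no further lowering is possible, and the stabilized sequence must guess in the required stationary sense. The delicate bookkeeping is ensuring each replacement still yields a genuine club of order type exactly $\mu$ and that the limit stages of the iteration produce a legitimate $S$-club sequence.

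The main obstacle, as I see it, is precisely the weakness of the conclusion: unlike Theorem~\ref{shelah1}, we are only promised a \emph{stationary} trace inside $E$ at successor indices, so the contradiction cannot come from a single pressing-down on $\lambda$; it has to be extracted from the interaction between the club filter on $\mu$ (controlling the index set $\{\zeta:\alpha^\delta_{\zeta+1}\in E\}$) and the club filter on $\mu^+$ (controlling which $\delta\in S$ are hit). Making these two filters talk to each other correctly — choosing the right invariant so that a non-stationary trace at \emph{every} $\delta$ forces a uniform improvement that cannot be repeated — is where I expect the real work to lie, and it is likely why Shelah's original argument, and any clean reproof, requires genuine care rather than a direct transcription of the $cf(\lambda)\ge\mu^{++}$ proof.
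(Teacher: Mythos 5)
Your proposal is a strategy outline rather than a proof, and it is missing the one idea that actually makes the theorem work. A direct shrink-and-re-enumerate iteration of the kind you describe cannot target the successor points $\alpha^\delta_{\zeta+1}$: each time you replace $C_\delta$ by a subset and re-enumerate, the successor points move, so a witnessing club asserting ``successor points avoid $E$ club-often'' transfers no usable information to the shrunken sequence. What \emph{does} survive shrinking is the weaker \emph{interval} statement, and that is exactly how the paper proceeds: it first proves (Claim \ref{pim}) that some $S$-club sequence satisfies, for every club $E$, that $\{\zeta<\mu:(\alpha^\delta_\zeta,\alpha^\delta_{\zeta+1})\cap E\neq\emptyset\}$ is stationary for stationarily many $\delta$, via a shrinking iteration of length only $\omega$. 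The passage from interval guessing to successor-point guessing is then achieved by a second device that your sketch lacks entirely: the partial functions $f_{\delta,E}(\zeta)=\sup\bigl(E\cap(\alpha^\delta_\zeta,\alpha^\delta_{\zeta+1})\bigr)$ together with a stabilization argument (Claim \ref{dana}) producing a single club $E_0$ such that for every club $E\subseteq E_0$ the values $f_{\delta,E}(\zeta)$ and $f_{\delta,E_0}(\zeta)$ agree on a stationary set of $\zeta$ for stationarily many $\delta$. One then \emph{enlarges} (rather than shrinks) each $C_\delta$ by inserting the points $f_{\delta,E_0}(\zeta)$; these become the successors of the $\alpha^\delta_\zeta$ in the new club and land in an arbitrary club $D$ by applying Claim \ref{dana} to $E=D\cap E_0$. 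Without something playing the role of $f_{\delta,E}$ and $E_0$, your plan has no mechanism for forcing successor points into a prescribed club, which is precisely ``the real work'' you correctly anticipate but do not supply.

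Two further steps in your sketch would fail as stated. First, iterations of length $\mu^+$ (or even $\mu$) are not available here: at a limit stage one must intersect the accumulated clubs of $\delta$, and the intersection of $\mu$ many clubs of an ordinal $\delta$ with $cf(\delta)=\mu$ need not be unbounded in $\delta$, so your limit stages can fail to yield an $S$-club sequence at all. The actual argument uses iterations of length $\omega$ only --- run twice, once in Claim \ref{pim} and once in Claim \ref{dana} --- and this is exactly where the hypothesis that $\mu$ is \emph{uncountable} regular enters: $cf(\delta)=\mu>\omega$ guarantees that countable intersections of clubs in $\delta$ (and in $\mu$) are again clubs. Second, no pressing-down on $\mu$ and no ordinal rank are needed: in Claim \ref{pim} the contradiction is that the suprema $\sup(C^n_\delta\cap\xi)$ stabilize at some $\alpha<\xi$ while $\xi\in E_n$ sits in the forbidden interval $(\alpha,\min(C^n_\delta\setminus\alpha+1))$; in Claim \ref{dana} it is simply an infinite strictly decreasing sequence of ordinals $f_{\delta,E_0}(\zeta)>f_{\delta,E_1}(\zeta)>\cdots$, obtained by choosing $\zeta$ in the countable intersection $d=\bigcap_n d^n_\delta$ meeting the stationary set $\mathrm{dom}\,f_{\delta,E}$. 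So your instinct that the club filters on $\mu$ and on $\mu^+$ must interact is sound, but the interaction is realized by this concrete well-foundedness argument on the sup-values, not by the rank-lowering scheme you propose, which as written neither terminates demonstrably nor produces a sequence with the required successor-point property.
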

\begin{proof} Let us prove something easier first.
\begin{clm}\label{pim}There is an $S$-club sequence $\underline{C}=\langle C_\delta:\delta\in S\rangle$, such that $C_\delta=\{\alpha^\delta_\zeta:\zeta<\mu\}$ and for every club $E\subseteq \lambda$ there is $\delta\in S$ (equivalently, stationary many) such that $$\{\zeta<\mu:(\alpha^\delta_\zeta,\alpha^\delta_{\zeta+1})\cap E\neq\emptyset\} \text{ is stationary}.$$
\end{clm}
\begin{proof} Suppose on the contrary that this does not hold. Take any $S$-club sequence $\underline{C}_0=\langle C_\delta^0:\delta\in S\rangle$. This does not satisfy the above claim, thus there is some club $E_0\subseteq \lambda$ such that for every $\delta\in S$ there is a club $e_\delta^0\subseteq \delta$ such that $$\alpha\in e_\delta^0\cap C_\delta^0\Rightarrow E_0\cap (\alpha,\min( C_\delta^0\setminus\alpha+1))=\emptyset$$
Let $C_\delta^1=C_\delta^0\cap e_\delta^0$ for $\delta\in S$ and $\underline{C}_1=\langle C_\delta^1:\delta\in S\rangle$. If we defined $\underline{C}_n$ for some $n\in\omega$ then  $\underline{C}_n$ does not guesses well each club, thus there is some club $E_{n}\subseteq \lambda$ such that for every $\delta\in S$ there is a club $e_\delta^n\subseteq \delta$ such that $$\alpha\in e_\delta^n\cap C_\delta^n\Rightarrow E_n\cap (\alpha,\min( C_\delta^n\setminus\alpha+1))=\emptyset$$
Let $C_\delta^{n+1}=C_\delta^n\cap e_\delta^n$ for $\delta\in S$ and $\underline{C}_{n+1}=\langle C_\delta^{n+1}:\delta\in S\rangle$.\\ Since $cf(\delta)=\mu>\omega$ for $\delta\in S$, $C_\delta=\bigcap\{C_\delta^n:n\in\omega\}$ is a club in $\delta$. Let $E=\bigcap\{E_n:n\in\omega\}$ and pick $\delta\in E\cap S$ such that $tp(\delta\cap E) > \mu$. Thus there is some $\xi<\delta$ such that $\xi \in E\setminus C_\delta$ and $\min C_\delta < \xi$. Since the sequence $\langle C_\delta^n:n\in\omega\rangle$ is decreasing, $\sup (C_\delta^n\cap\xi)$ is decreasing either thus there is some $m\in \omega$ such that $\alpha=\sup (C_\delta^n\cap\xi)<\xi$ for $n\geq m$. Since $\alpha=\sup (C_\delta^{n+1}\cap\xi)\in C_\delta^{n+1}= e_\delta^n\cap C_\delta^n$  and $\xi\in E_n$, $\alpha<\xi<\min( C_\delta^n\setminus\alpha+1)$, this contradicts the fact that $E_n\cap (\alpha,\min( C_\delta^n\setminus\alpha+1))=\emptyset$.
\end{proof}
Let $\langle C_\delta:\delta\in S\rangle$ be an $S$-club sequence given by Claim \ref{pim}, with continuous enumeration $C_\delta=\{\alpha^\delta_\zeta:\zeta<\mu\}$. For any club $E\subseteq \lambda$ and $\delta\in S$ define the following partial function $f_{\delta, E}$ on $\mu$:
$$f_{\delta,E}(\zeta)=\sup \bigl(E\cap(\alpha^\delta_\zeta,\alpha^\delta_{\zeta+1})\bigr)$$
Claim \ref{pim} states that for any club $E\subseteq \lambda$ there is stationary many $\delta\in S$ such that $\text{dom} f_{\delta,E}$ is stationary in $\mu$.
\begin{clm}\label{dana} There is a club $E_0\subseteq \lambda$ such that for each club $E\subseteq E_0$ there is $\delta \in S$ (equivalently, stationary many) such that $$\{\zeta<\mu:\zeta\in \text{dom}f_{\delta, E}\text{ and } f_{\delta, E}(\zeta)=f_{\delta, E_0}(\zeta)\} \text{ is stationary}.$$
\end{clm}
\begin{proof}Suppose on the contrary that this does not hold. Thus $E_0=\lambda$ is not good, so there is some club $E_1\subseteq E_0$ such that for every $\delta\in S$ there is some club $d_\delta^1\subseteq \mu$ such that$$\zeta\in d_\delta^1 \Rightarrow \zeta\notin \text{dom} f_{\delta,E_1} \text{ or } f_{\delta,E_1}(\zeta)<f_{\delta,E_0}(\zeta).$$
If we have the club $E_n$ for some $n\in\omega$, then there is some club $E_{n+1}\subseteq E_n$ such that for every $\delta\in S$ there is some club $d_\delta^{n+1}\subseteq \mu$ such that$$\zeta\in d_\delta^{n+1} \Rightarrow \zeta\notin \text{dom} f_{\delta,E_{n+1}} \text{ or } f_{\delta,E_{n+1}}(\zeta)<f_{\delta,E_n}(\zeta).$$
Let $E=\bigcap\{E_n:n\in \omega\}$, then $E$ is a club. There is stationary many $\delta\in S$ such that $\text{dom} f_{\delta,E}$ is stationary, let $\delta \in E$ such that $\text{dom} f_{\delta,E}$ is stationary. Since for $n\in\omega:$ $E\subseteq E_n$ thus $\text{dom} f_{\delta,E}\subseteq \text{dom} f_{\delta,E_n}$. Let $d=\bigcap\{d_\delta^n:n\in\omega\}$, then $d\subseteq \mu$ is a club. Thus there is some $\zeta\in d\cap \text{dom} f_{\delta,E}$, thus $\zeta\in \text{dom} f_{\delta,E_n}$ for each $n\in\omega$. But then by the definition of the sets $d_\delta^n$ we have an infinite decreasing sequence of ordinals: $$f_{\delta,E_0}(\zeta)>f_{\delta,E_1}(\zeta)>...>f_{\delta,E_n}(\zeta)>f_{\delta,E_{n+1}}(\zeta)>...$$ which is a contradiction.
\end{proof}
With the aid of this club $E_0$ we modify the sequence $\langle C_\delta:\delta\in S\rangle$. Let $S_0=\{\delta\in S: \text{dom}f_{\delta,E_0} \text{is stationary in }\mu\}\subseteq S$, then $S_0$ is stationary. Let $\tilde{C}_\delta=C_\delta$ for $\delta\in S\setminus S_0$. Let $\tilde{C}_\delta=C_\delta\cup\{f_{\delta,E_0}(\zeta):\zeta\in \text{dom}f_{\delta,E_0}\}$, clearly $\tilde{C}_\delta$ is a club in $\delta$. We claim that the sequence $\langle \tilde{C}_\delta:\delta\in S\rangle$ has the desired property. Let $D\subseteq \lambda$ be any club, let $E=D\cap E_0\subseteq E_0$. Then by Claim \ref{dana} for stationary many $\delta\in S$ $\{\zeta<\mu:\zeta\in \text{dom}f_{\delta, E}\text{ and } f_{\delta, E}(\zeta)=f_{\delta, E_0}(\zeta)\}$ is stationary. For such a $\zeta<\mu$, the successor of $\alpha^\delta_\zeta$ in $\tilde{C}_\delta$ is $f_{\delta, E_0}(\zeta)$ and $f_{\delta,E_0}(\zeta)\in D$, since $f_{\delta, E_0}(\zeta)=f_{\delta, E}(\zeta)\in E=E_0\cap D$.
\end{proof}

\end{document}